\newtheorem{theorem}{Theorem}  %\newtheorem{figura}{Figure}
\newtheorem{corollary}{Corollary} \newtheorem{Definition}{Definition} \newtheorem{example}{Example}
 \newtheorem{proposition}{Proposition} 
\newenvironment{proof}[1][Proof]{\noindent\textbf{#1.} }{\ \rule{0.5em}{0.5em}}
\def\F{\mathbb{F}}
\def\Z{\mathbb{Z}}
\def\mA{\mathcal{A}}\def\mB{\mathcal{B}}
\begin{document}

\title{New Constructions of Sonar Sequences}

\author{Diego F. Ruiz\footnote{Department of Mathematics, Universidad del Cauca, Colombia.} , Carlos A. Trujillo$^*$, and Yadira Caicedo\footnote{Student of Doctorates in Mathematics at Universidad del Valle, Colombia.}\\{\small dfruiz@unicauca.edu.co, trujillo@unicauca.edu.co, yadira0427@gmail.com}}
%\noindent\address{Departamento de Matemáticas -- Universidad del Cauca\newline Calle 5 No. 4--70 -- Popayán, Colombia.\newline Grupo de Investigación: Álgebra, Teoría de Números y Aplicaciones, ERM.}
%\email{dfruiz@unicauca.edu.co}
%\email{trujillo@unicauca.edu.co}
%\address{Departamento de Matemáticas -- Universidad del Valle}
\date{}
\maketitle

\begin{abstract}
A set $\mA$ is a Sidon set in an additive group $G$ if every element of $G$ can be written at most one way as sum of two elements of $\mA$. A particular case of two--dimensional Sidon sets are the sonar sequences, which are two--dimensional synchronization patterns. The main known constructions of sonar sequences are reminiscent of Costas arrays constructions (Welch and Golomb). Other constructions are Quadratic and Shift. In this work we present new constructions of sonar sequences, independent of the named above, using one--dimensional Sidon sets.
\medskip

\noindent\textbf{Keywords and phrases.} \,Sidon sets, sonar sequences, Costas arrays.

\noindent\textbf{2000 Mathematics Subject Classification.}\, 43A46, 11B50, 11B83.

\vspace{0.5cm}

\end{abstract}

%%%%%%%%%%%%%%%%%%%%%%%%%%%%%%%%%%%%%%%%%%%%%%%%%%%%%%%%%%%%%%%%%%%%%%%%%%%%%%%%%%%%%%%%%%%%%%%%%%%%%%%%%
%%%%%%%%%%%%%%%%%%%%%%%%%%%%%%%%%%%%%%%%%%%%%%%%%%%%%%%%%%%%%%%%%%%%%%%%%%%%%%%%%%%%%%%%%%%%%%%%%%%%%%%%%

\section{Introduction}
Sidon sets is a problem in additive number theory dating from 1930's with a big impact in engineering, specially in communication theory. These sets receive its name due to Simon Sidon who introduced them in order to solve an harmonic analysis problem. Sidon wondered about the existence of sets of positive integers in which all sums of two elements of the set are different, which is equivalent to establish that all differences of two elements (distincts) of that set are also different \cite{GolombRulersMathMag}. Let $n$ be a positive integer. Formally, $\mA=\{a_1,\ldots,a_n\}\subset \mathbb{Z}$ is a Sidon set if, for all $x\in \mathbb{Z}$ and all $a_i,a_j\in \mA$ with $i\leq j$, the amount of solutions of the equation $x=a_i+a_j$ is 0 or 1. Though Sidon sets were initially introduced in $\mathbb{Z}$, it is possible to construct them in other groups and even any dimension \cite{ProductOfGroups}.

Applications of Sidon sets include radio communications, x-ray crystallography, self--orthogonal codes, linear arrays in the formation of optimal linear telescope arrays, and pulse phase modulation (PPM) as a form of communications \cite{Rankin93optimalgolomb}.

Two--dimensional Sidon sets are used in applications to sonar and radar systems, which deal with the following problem: ``How can an observer determine the distance and velocity of an object that is moving?''. The solution to this problem use the Doppler effect, which states that when a signal bounces off a moving target its frequency changes in direct proportion to the velocity of the object relative to the observer. That is, if the observer sends out a signal towards a moving target, the change between the frequency of the outgoing and that of the returning signal will allow us to determine the velocity of the target, and the time it took to make the round trip will allow us to determine the distance \cite{MorenoExhaustive}.

Sidon sets in sonar and radar systems are represented by a matrix $A$ with entries 0 or 1, and the signal consists of available frequencies  $f_1,\ldots,f_m$ to be transmitted at each one of the consecutive time intervals $t_1,\ldots,t_n$, with $m$ a positive integer. According to this consideration, the entry $a_{ij}$ of $A$ is 1 if frequency $f_i$ is transmitted in time interval $t_j$, otherwise is 0 \cite{CostasArraysConstructions, SurveyCostas}. Note that in these systems there is just one 1 per column. Other applications of two--dimensional Sidon sets appear in optimal communication systems, cryptography, key distribution in cellular networks, military satellite communications, and other areas \cite{SurveyCostas}.

Respecting to sonar sequences, they were introduced as examples of two--dimensional synchronization patterns with minimum ambiguity \cite{TwoDimensional, Sonar}. A sequence $[f_1,\ldots,f_n]$ is a sonar sequence of length $n$ over the set of integers $\{1,2,\ldots,m\}$ (or an $m\times n$ sonar sequence) if for all integers $h,i,$ and $j$, with $1\leq h\leq n-1$ and $1\leq i,j\leq n-h$,
\begin{equation}
    f_{i+h}-f_{i}=f_{j+h}-f_{j}\quad\Rightarrow i=j.
    \label{Sonar}
\end{equation}
If \eqref{Sonar} is replaced by
\begin{equation}
    f_{i+h}-f_{i}\equiv f_{j+h}-f_{j}\pmod m\quad\Rightarrow i=j,
    \label{SonarModular}
\end{equation}
the sequence is a sonar sequence of length $n$ and module $m$ (or an $m\times n$ modular sonar sequence).

In \cite{Sonar} are presented algorithms which allow us to construct modular sonar sequences, some of them reminiscent of Costas array constructions. Furthermore, is presented a way to transform a given sequence in another in order to be close to the main problem for sonar sequence which states:
\[
    \text{``For a fixed $m$, find the largest $n$ for which there exists an $m\times n$ sonar sequence''}.
\]
Let $p$ be a prime and $r$ a positive integer. If $q=p^r$, $\mathbb{F}_q$ denotes the finite field with $q$ elements. Following we present the sonar sequences constructions given in \cite{Sonar}.

\begin{theorem}[Quadratic construction]
Let $p$ be a odd prime, and $a,b,c$ integers such that $a\not\equiv 0\pmod p$. The sequence $[f_1,\ldots,f_{p+1}]$ defined by $f_i:=ai^2+bi+c\pmod p$ is a $p\times (p+1)$ modular sonar sequence.
\end{theorem}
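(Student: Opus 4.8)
The plan is to verify condition~\eqref{SonarModular} directly by expanding the defining formula $f_i = ai^2 + bi + c$ and showing that the modular difference equation forces $i = j$.

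First I would compute the difference $f_{i+h} - f_i$ explicitly. Substituting into $f_i := ai^2 + bi + c$, the quadratic and constant terms interact cleanly: one finds
\begin{equation}
f_{i+h} - f_i = a(i+h)^2 + b(i+h) - ai^2 - bi = 2aih + ah^2 + bh \pmod p.
\end{equation}
The key observation is that this is \emph{linear} in $i$, with the terms $ah^2 + bh$ independent of $i$. This is exactly why a quadratic polynomial produces a sonar sequence: the second difference collapses the quadratic dependence.

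Next I would impose the hypothesis $f_{i+h} - f_i \equiv f_{j+h} - f_j \pmod p$. Using the expression above, the constant terms $ah^2 + bh$ cancel on both sides, leaving $2aih \equiv 2ajh \pmod p$, i.e. $2ah(i - j) \equiv 0 \pmod p$. Now I would use the hypotheses on the parameters: since $p$ is an odd prime we have $2 \not\equiv 0 \pmod p$, the assumption $a \not\equiv 0 \pmod p$ gives $a$ invertible, and the range restriction $1 \leq h \leq p$ together with $h \geq 1$ ensures $h \not\equiv 0 \pmod p$. Because $p$ is prime, the product $2ah$ is a unit modulo $p$, so I can cancel it to conclude $i \equiv j \pmod p$, and since both $i,j$ lie in the admissible index range this yields $i = j$, as required.

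The routine part is the algebra; the only point requiring genuine care is the justification that $2ah$ is invertible modulo $p$. The conditions $p$ odd and $a \not\equiv 0 \pmod p$ are stated in the hypothesis, so the main obstacle is confirming that $h \not\equiv 0 \pmod p$ over the full range of $h$. Since the sequence has length $p+1$, the index $h$ ranges over $1 \leq h \leq p$, and every such $h$ is strictly between $0$ and $p$, hence nonzero modulo $p$; this is precisely what makes $p+1$ the natural length for this construction, and extending the length to $p+2$ would allow $h = p+1 \equiv 1$ but also admit collisions, so the length bound is sharp.
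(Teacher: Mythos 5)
Your overall strategy is the right one (and the paper itself gives no proof of this theorem --- it is quoted from the Moreno--Games--Taylor reference \cite{Sonar} --- so a direct verification is exactly what is called for). The algebra is correct: $f_{i+h}-f_i \equiv 2aih + ah^2 + bh \pmod p$, and the difference condition reduces to $2ah(i-j)\equiv 0 \pmod p$. However, the one step you yourself single out as ``the only point requiring genuine care'' is handled incorrectly. Since the sequence has length $n=p+1$, the shift $h$ ranges over $1\leq h\leq n-1=p$, and your claim that ``every such $h$ is strictly between $0$ and $p$'' is false: the value $h=p$ is admissible and satisfies $h\equiv 0\pmod p$, so for that value $2ah$ is \emph{not} a unit and the cancellation you rely on breaks down.

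The gap is real but easily repaired: when $h=p$ the index constraint $1\leq i,j\leq n-h=1$ forces $i=j=1$ outright, so condition~\eqref{SonarModular} holds trivially for that shift, and for $1\leq h\leq p-1$ your unit argument goes through as written. You should split the argument into these two cases rather than asserting $h\not\equiv 0\pmod p$ across the whole range. Your closing remark about length $p+2$ being impossible is also not justified as stated --- ruling out longer sequences requires exhibiting an actual collision or invoking the general upper bound on modular sonar sequences, not merely observing that $h=p+1\equiv 1$ recurs --- but since that remark is not part of what the theorem asserts, it does not affect the correctness of the proof once the $h=p$ case is fixed.
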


\begin{theorem}[Shift construction]
Let $q=p^r$ be a prime power, $\alpha$ a primitive element of $\mathbb{F}_{q^2}$ and $\beta$ a primitive element of $\mathbb{F}_{q}$. The sequence $[f_1,\ldots,f_q]$ defined by $f_i:=\log_{\beta}(\alpha^{iq}+\alpha^i)$ is a $(q-1)\times q$ modular sonar sequence.
\end{theorem}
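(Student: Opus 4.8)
The plan is to recognize the defining expression as a field trace and then convert the additive sonar condition into a single multiplicative identity in $\mathbb{F}_q^*$ that can be analyzed by one factorization. Write $T(x):=x+x^q$ for the trace $\mathrm{Tr}_{\mathbb{F}_{q^2}/\mathbb{F}_q}$, so that $\alpha^{iq}+\alpha^i=T(\alpha^i)$; since $T(x)^q=x^q+x^{q^2}=T(x)$, each $T(\alpha^i)$ lies in $\mathbb{F}_q$ and $f_i=\log_\beta T(\alpha^i)$ is a well-defined element of $\mathbb{Z}/(q-1)\mathbb{Z}$ precisely when $T(\alpha^i)\neq 0$. Before anything else I would settle this well-definedness, since $\log_\beta$ is only defined on $\mathbb{F}_q^*$. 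Here $T(\alpha^i)=0\iff\alpha^{i(q-1)}=-1$, and I would record that in characteristic two this never occurs for $1\le i\le q$ (it would force $(q+1)\mid i$), while in odd characteristic it forces $i\equiv(q+1)/2\pmod{q+1}$, the one index the construction must handle by convention.

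Next I would use that $\log_\beta:\mathbb{F}_q^*\to\mathbb{Z}/(q-1)\mathbb{Z}$ is a group isomorphism to rewrite the hypothesis. Since $f_{i+h}-f_i=\log_\beta\!\big(T(\alpha^{i+h})/T(\alpha^i)\big)$ in $\mathbb{Z}/(q-1)\mathbb{Z}$, the modular equality $f_{i+h}-f_i\equiv f_{j+h}-f_j\pmod{q-1}$ is equivalent to the exact identity in $\mathbb{F}_q^*$
\[
T(\alpha^{i+h})\,T(\alpha^j)=T(\alpha^{j+h})\,T(\alpha^i).
\]
Setting $a=\alpha^i$, $b=\alpha^j$, $c=\alpha^h$, the heart of the argument is the factorization
\[
T(ac)\,T(b)-T(bc)\,T(a)=(c-c^q)\,(ab^q-a^q b),
\]
which I would verify by expanding both products and cancelling the symmetric terms $abc$ and $a^qb^qc^q$. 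This identity is the main obstacle in the sense that everything hinges on reaching this clean product form; once it is in hand the rest is bookkeeping.

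Finally I would analyze each factor over the allowed ranges. The first factor $c-c^q=\alpha^h\big(1-\alpha^{h(q-1)}\big)$ vanishes iff $(q+1)\mid h$, which is impossible for $1\le h\le q-1$, so $c\neq c^q$. Hence the identity reduces the hypothesis to $ab^q=a^q b$, i.e. $\alpha^{\,i+jq}=\alpha^{\,iq+j}$, i.e. $(i-j)(q-1)\equiv 0\pmod{q^2-1}$, which is equivalent to $i\equiv j\pmod{q+1}$. Since $1\le i,j\le q-h\le q-1$ we have $|i-j|\le q-2<q+1$, forcing $i=j$, which is exactly \eqref{SonarModular} with $m=q-1$ and $n=q$. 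The only point requiring care beyond this computation is the well-definedness flagged above; modulo the convention adopted for the single trace-zero index in odd characteristic, the argument is complete.
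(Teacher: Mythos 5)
The paper offers no proof of this statement: the Shift construction is quoted as background from Moreno, Games and Taylor \cite{Sonar}, so there is nothing internal to compare your argument against, and I can only assess it on its own terms. It is correct. Your route --- recognizing $\alpha^{iq}+\alpha^i$ as $\mathrm{Tr}_{\mathbb{F}_{q^2}/\mathbb{F}_q}(\alpha^i)$, pushing the congruence mod $q-1$ through the isomorphism $\log_\beta$ to get the multiplicative identity $T(\alpha^{i+h})T(\alpha^j)=T(\alpha^{j+h})T(\alpha^i)$ in $\mathbb{F}_q^*$, and then exploiting the factorization
\[
T(ac)\,T(b)-T(bc)\,T(a)=(c-c^q)(ab^q-a^qb)
\]
--- is the standard argument for shift sequences, and the factorization checks out: the symmetric terms $abc$ and $a^qb^qc^q$ cancel and the remaining four terms group as claimed. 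The two divisibility steps are also right: $(q+1)\nmid h$ for $1\le h\le q-1$ kills the first factor's vanishing, and $ab^q=a^qb$ gives $(q+1)\mid(i-j)$, which forces $i=j$ since $|i-j|\le q-2$.

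Your well-definedness remark deserves emphasis, because it exposes a genuine imprecision in the theorem as stated here. The nonzero trace-zero elements of $\mathbb{F}_{q^2}$ form a single coset of $\mathbb{F}_q^*=\langle\alpha^{q+1}\rangle$, so exactly one residue $i_0$ modulo $q+1$ satisfies $\alpha^{i_0q}+\alpha^{i_0}=0$. In characteristic $2$ that residue is $i_0\equiv 0$, which misses $[1,q]$; but for odd $q$ it is $i_0=(q+1)/2\in[1,q]$, so $f_{(q+1)/2}$ is literally undefined as written. The source construction handles this with a blank entry or a shifted index window; the bare statement reproduced in this paper does not. Isolating that defect from the otherwise complete argument, rather than papering over it, is exactly the right call.
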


\begin{theorem}[Extended exponential Welch construction]
Let $\alpha$ be a primitive element module $p$ and $s$ an integer. The sequence $[f_0,f_1,\ldots,f_{p-1}]$ defined by $f_i:=\alpha^{i+s}$ is a $p\times p$ modular sonar sequence. If $s=0$, the sequence $[f_1,\ldots,f_{p-1}]$ defined as above is a $p\times (p-1)$ modular sonar sequence.
\end{theorem}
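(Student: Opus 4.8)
The plan is to use the multiplicative form of the terms to factor the first difference. For any base index $i$ and gap $h$ one computes
\[
f_{i+h}-f_i=\alpha^{i+h+s}-\alpha^{i+s}=\alpha^{i+s}\left(\alpha^{h}-1\right)\pmod p ,
\]
so the dependence on the base index collapses to the single factor $\alpha^{i+s}$, while the gap contributes only the factor $\alpha^{h}-1$. The hypothesis $f_{i+h}-f_i\equiv f_{j+h}-f_j\pmod p$ of the sonar condition \eqref{SonarModular} therefore becomes $\alpha^{i+s}(\alpha^{h}-1)\equiv\alpha^{j+s}(\alpha^{h}-1)\pmod p$. Since the sonar condition only involves differences $f_{i+h}-f_i$, the labelling of the starting index is irrelevant, and I may let $i,j$ run over the relevant block of consecutive integers regardless of whether the sequence is written starting at $f_0$ or at $f_1$.

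The second step is to cancel the common factor $\alpha^{h}-1$. Because $\alpha$ is primitive modulo $p$ its multiplicative order is $p-1$, so $\alpha^{h}\equiv 1\pmod p$ holds precisely when $(p-1)\mid h$. In the $p\times(p-1)$ sequence the admissible gaps are $1\le h\le p-2$, and in the $p\times p$ sequence they are $1\le h\le p-1$; in both cases every gap with $1\le h\le p-2$ gives $\alpha^{h}-1\not\equiv0\pmod p$, hence an invertible residue that may be cancelled. After cancelling, and then multiplying by the unit $\alpha^{-s}$, we obtain $\alpha^{i}\equiv\alpha^{j}\pmod p$, equivalently $i\equiv j\pmod{p-1}$. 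To upgrade this congruence to the equality $i=j$ I would use that for a fixed admissible $h$ the indices lie in a block of at most $n-h$ consecutive integers, so $|i-j|\le (n-h)-1\le n-2\le p-2<p-1$; a congruence modulo $p-1$ between two integers that differ by less than $p-1$ forces them to be equal.

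I expect the only real obstacle to be the boundary gap $h=p-1$ that occurs in the $p\times p$ construction, where $\alpha^{h}-1\equiv0\pmod p$ and the cancellation above is unavailable. Here I would argue directly from the index range: the constraint $1\le i,j\le n-h=p-(p-1)=1$ leaves $i=j=1$ as the only possibility, so the implication $i=j$ holds trivially and no cancellation is needed. Combining the three steps proves both assertions at once, the $p\times(p-1)$ case being simply the restriction of the argument to $s=0$ and to the shorter range of $h$, where the troublesome gap $h=p-1$ never arises.
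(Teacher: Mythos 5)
Your proof is correct. Note, however, that the paper does not prove this theorem at all: it is one of the five known constructions quoted from Moreno, Games, and Taylor \cite{Sonar} and stated without proof in the introduction, so there is no internal argument to compare against. Your argument is the standard one for the Welch construction: factor the difference as $\alpha^{i+s}(\alpha^{h}-1)$, cancel the unit $\alpha^{h}-1$ when $(p-1)\nmid h$ to reduce to $i\equiv j\pmod{p-1}$, and close with the observation that admissible indices lie in a window shorter than $p-1$; your separate treatment of the boundary gap $h=p-1$ in the $p\times p$ case, where the index range collapses to a single value, is exactly the right way to dispose of the one step where cancellation fails.
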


\begin{theorem}[Logarithmic Welch construction]
Let $\alpha$ be a primitive element module $p$. The sequence $[f_1,\ldots,f_{p-1}]$ defined by $f_i:=\log_{\alpha}i$ is a $(p-1)\times(p-1)$ modular sonar sequence.
\end{theorem}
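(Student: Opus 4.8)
The plan is to exploit the single structural fact that makes this construction work: the discrete logarithm $\log_\alpha$ is a group isomorphism from the multiplicative group $((\mathbb{Z}/p\mathbb{Z})^{*},\cdot)$ onto the additive group $(\mathbb{Z}/(p-1)\mathbb{Z},+)$. Here the modulus is $m=p-1$ and the length is $n=p-1$, so I must verify \eqref{SonarModular} with these parameters; the idea is that the isomorphism converts the \emph{additive} difference condition of the sonar definition into a \emph{multiplicative} statement modulo $p$, which is easy to resolve.

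First I would fix $h$ with $1\leq h\leq p-2$ and $i,j$ with $1\leq i,j\leq (p-1)-h$, and record a well-definedness check: each of $i,\,i+h,\,j,\,j+h$ lies in $\{1,\dots,p-1\}$ and is therefore a nonzero residue modulo $p$, so each $f_i=\log_\alpha i$ makes sense. Then, using the homomorphism identity $\log_\alpha(ab)\equiv \log_\alpha a+\log_\alpha b \pmod{p-1}$, I would rewrite the difference as $f_{i+h}-f_i\equiv \log_\alpha\!\big((i+h)\,i^{-1}\big)\pmod{p-1}$, where $i^{-1}$ denotes the inverse of $i$ in $(\mathbb{Z}/p\mathbb{Z})^{*}$. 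The sonar hypothesis $f_{i+h}-f_i\equiv f_{j+h}-f_j \pmod{p-1}$ thus becomes $\log_\alpha\!\big((i+h)\,i^{-1}\big)\equiv \log_\alpha\!\big((j+h)\,j^{-1}\big)\pmod{p-1}$.

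Next, since $\log_\alpha$ is a bijection onto $\mathbb{Z}/(p-1)\mathbb{Z}$, equality of logarithms is equivalent to equality of their arguments in $(\mathbb{Z}/p\mathbb{Z})^{*}$, which yields $(i+h)\,i^{-1}\equiv (j+h)\,j^{-1}\pmod p$, that is, $1+h\,i^{-1}\equiv 1+h\,j^{-1}\pmod p$. Because $1\leq h\leq p-2$ forces $h\not\equiv 0\pmod p$, the factor $h$ is invertible modulo $p$ and can be cancelled, leaving $i^{-1}\equiv j^{-1}\pmod p$, hence $i\equiv j\pmod p$. As $i$ and $j$ both lie in $\{1,\dots,p-2\}$, a window of length less than $p$, this congruence forces $i=j$, which is exactly the conclusion required by \eqref{SonarModular}.

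The argument is in essence a translation through the isomorphism, so I do not expect a deep obstacle. The only points demanding genuine care are the three bookkeeping items on which the proof silently rests: that every argument $i,\,i+h,\,j,\,j+h$ stays inside $\{1,\dots,p-1\}$ so that the logarithms are defined, that $h$ is a unit modulo $p$ (precisely why the admissible range is $h\leq p-2$ rather than $h\leq p-1$), and that the index constraints pin $i$ and $j$ into a single residue window so that $i\equiv j\pmod p$ upgrades to genuine equality. The step I would scrutinize most is this last range analysis, since an off-by-one error there is the likeliest way the conclusion $i=j$ could fail.
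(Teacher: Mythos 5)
Your proof is correct: the reduction through the isomorphism $\log_\alpha\colon (\Z/p\Z)^{*}\to\Z_{p-1}$ to the congruence $1+hi^{-1}\equiv 1+hj^{-1}\pmod p$ is valid, and your bookkeeping (all arguments nonzero mod $p$, $h$ a unit since $h\leq p-2$, and $i,j$ confined to a window of length less than $p$ so that $i\equiv j\pmod p$ forces $i=j$) closes every gap. The paper states this theorem as a known construction imported from the sonar-sequences literature and gives no proof of its own, so there is nothing to compare against; your argument is the standard one and stands on its own.
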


\begin{theorem}[Golomb construction]
Let $q$ be a prime power greater than 2, and let $\alpha,\beta$ be primitive elements $\mathbb{F}_q$. The sequence $[f_1,\ldots,f_{q-2}]$ defined by $f_i:=j$ if and only if $\alpha^i+\beta^j=1$ is a $(q-1)\times (q-2)$ modular sonar sequence. If $\alpha=\beta$ this construction is known as Lempel construction.
\end{theorem}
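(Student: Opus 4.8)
The plan is to exploit the fact that, since $\beta$ is primitive in $\mathbb{F}_q$, its powers run exactly once through the nonzero elements and, crucially, the exponents of $\beta$ are determined only modulo $q-1$ --- which is precisely the module $m$ of the claimed sonar sequence. First I would check that the sequence is well defined: for $1\le i\le q-2$ we have $\alpha^i\neq 1$, so $1-\alpha^i$ is a nonzero element of $\mathbb{F}_q$, hence $1-\alpha^i=\beta^j$ for a unique $j$ with $1\le j\le q-1$; moreover $\beta^j=1$ would force $\alpha^i=0$, which is impossible, so in fact $1\le j\le q-2$ and each $f_i$ takes values in $\{1,\dots,q-2\}$ as required.

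The heart of the argument is the reformulation $\beta^{f_i}=1-\alpha^i$. Fix a shift $h$ with $1\le h\le q-3$ and indices $i,j$ with $1\le i,j\le q-2-h$; note that then $i,i+h,j,j+h$ all lie in $\{1,\dots,q-2\}$, so none of $\alpha^i,\alpha^{i+h},\alpha^j,\alpha^{j+h}$ equals $1$ and each quantity $1-\alpha^{(\cdot)}$ is a unit. Since $\beta$ has order $q-1$, the congruence
\[
 f_{i+h}-f_i\equiv f_{j+h}-f_j \pmod{q-1}
\]
is equivalent to the equality $\beta^{\,f_{i+h}-f_i}=\beta^{\,f_{j+h}-f_j}$ in $\mathbb{F}_q$, i.e.
\[
 \frac{1-\alpha^{i+h}}{1-\alpha^{i}}=\frac{1-\alpha^{j+h}}{1-\alpha^{j}}.
\]

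Next I would clear denominators and simplify. Writing $A=\alpha^h$, $u=\alpha^i$, $v=\alpha^j$, the cross-multiplied identity $(1-Au)(1-v)=(1-Av)(1-u)$ collapses, after cancelling the common terms $1$ and $Auv$, to $(u-v)(1-A)=0$. Because $1\le h\le q-3<q-1$ and $\alpha$ is primitive, $A=\alpha^h\neq 1$, so $1-A\neq 0$ and therefore $u=v$, that is $\alpha^i=\alpha^j$. Finally, since $i$ and $j$ both lie in an interval of length less than $q-1$, $\alpha^i=\alpha^j$ forces $i=j$, which is exactly the modular sonar condition \eqref{SonarModular}.

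The computation itself is short; the part that needs the most care is the bookkeeping of ranges --- verifying that the hypotheses $1\le h\le q-3$ and $1\le i,j\le q-2-h$ simultaneously guarantee that every $1-\alpha^{(\cdot)}$ appearing is a unit, that $\alpha^h\neq 1$ (so the factor $1-A$ may be cancelled), and that $\alpha^i=\alpha^j$ can be upgraded to $i=j$. The alignment of the module $m=q-1$ with the order of the multiplicative group $\mathbb{F}_q^{*}$ is what makes the passage from the additive congruence to the multiplicative equation exact, and losing track of it would be the easiest way to go wrong.
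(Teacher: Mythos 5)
The paper states this theorem without proof --- it is one of the known constructions recalled from the literature (the Moreno--Games--Taylor reference), so there is no internal argument to compare yours against. Your proof is correct and complete: the reduction of the modular difference condition to the multiplicative identity $\frac{1-\alpha^{i+h}}{1-\alpha^{i}}=\frac{1-\alpha^{j+h}}{1-\alpha^{j}}$, the algebraic collapse to $(\alpha^{i}-\alpha^{j})(1-\alpha^{h})=0$, and the range bookkeeping ensuring $\alpha^{h}\neq 1$ and that $\alpha^{i}=\alpha^{j}$ upgrades to $i=j$ are all exactly what is needed, and this is the standard argument for the Golomb/Lempel construction.
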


The main object of this paper is to present new constructions of sonar sequences independent of those given above. Our new construction (Theorem~\ref{new sonar general}) uses specials one--dimensional Sidon sets that are useful to establish it. We use this construction and apply some properties we identify in the Sidon sets--constructions due to Bose and due to Ruzsa to state three new constructions of sonar sequences.

The remain of this work is as follows: In Section 2 we present constructions of Sidon sets of Bose and Ruzsa (Theorems~\ref{ConsBose} and \ref{ConsRuzsa}) and some properties that emerge from those constructions (Propositions~\ref{proposition Bose} and \ref{proposition Ruzsa}) and that are used to present the new construction of sonar sequence (Theorem~\ref{new sonar general}) and its immediate results (Corollaries~\ref{Bose1} and \ref{CRuzsa1}) in Section 3. Finally, in Section 4 we present some related problems with this work.

In the following sections, if $a$ and $m$ are integers such that $a<m$, then $[a,m]$ denotes the finite set $\{a,a+1,\ldots,m\}$. Furthermore, if $G$ is a cyclic group generated by $g\in G$, we denote $G=\langle g\rangle$.

\section{Construction of Sidon sets of Bose and Ruzsa}
Although initially Sidon sets were introduced in the additive group $\Z$, they can be defined in a finite abelian group $(G,*)$ as follows.

\begin{Definition}
    Let $(G,*)$ be a finite abelian group. $\mA=\{a_1,\ldots,a_k\}\subseteq G$ is a Sidon set in $G$ if for all $a_i,a_j\in \mA$, with $1\leq i\leq j\leq k$, the amount of solutions of the equation $x=a_i*a_j$ is at most 1.
\end{Definition}

Note that this definition is equivalent to prove that the amount of solutions of the equation $x=a*b^{-1}$ is at most 1, for any $a,b\in\mA$ and $a\neq b$.

To prove that $\mA\subseteq G$ is a Sidon set we will use one of the following properties
\begin{enumerate}
    \item[(S1)] If $a*b=c*d$ then $\{a,b\}=\{c,d\}$, for any $a,b,c,d\in\mA$.
    \item[(S2)] Let $G$ and $G^\prime$ be two abelian groups. If $\varphi: G \rightarrow G'$ is an isomorphism and $\mA$ is a Sidon set in $G$, then $\varphi(\mA)$ is a Sidon set on $G'$.
\end{enumerate}

The main problem in Sidon sets consists in to establish the largest cardinality of a set satisfying Definition 1. That is, to study the asymptotically behavior of the function
\[
    f_2(G):=\max\{|\mA|:\,\mA\subseteq G \text{ is a Sidon set}\}.
\]
The value of the previous function for any group in general is not yet known, however there exist the following estimated for some upper and lower bounds.

%Let $\mA$ be any Sidon set in $G$ and let $p$ be a prime and $q$ a prime power.
Using counting techniques we can prove that
\begin{equation}
    f_2(G)\leq \left\lfloor\frac{1+\sqrt{4|G|-3}}{2}\right\rfloor.
    \label{trivial bound f2}
\end{equation}
Now, using three Sidon--set constructions (Ruzsa \cite{RuzaSolving}, Bose \cite{Bose}, and Singer \cite{Singer}) and \eqref{trivial bound f2} we can state $f_2(\Z_{p^2-p})=p-1$, $f_2(\Z_{q^2-1})=q$, and $f_2(\Z_{q^2+q+1})=q-1$.

In order to present new constructions of sonar sequences in Section III, we introduce the Sidon--set constructions of Bose and Ruzsa. %First, in Theorem \ref{B de alfa es conjunto B2 multiplicativo}, we show a construction of a Sidon set in a multiplicative group which allows us to establish in Theorem \ref{ConsBose} the Bose's construction. Latter, in Theorem \ref{TRuzsa1} we present a construction of a two dimensional Sidon set which help us, using the Chinese remainder theorem, to present the Ruzsa's construction in the Theorem \ref{ConsRuzsa}.

\subsection{Bose's construction}

Let $n\in\mathbb{N}$ and $p$ be a prime number. Consider the finite field $\F_{q}$ with $q=p^{n}$ elements. Let $\alpha$ be an algebraic element of degree $2$ over $\F_{q}$; the degree of the extension field $\F_{q}\left(\alpha\right)$ over $\F_{q}$ is 2, furthermore $\F_{q}\left(\alpha\right)=\F_{q^{2}}$. Because $\F_{q^2}$ is a finite field, we have that $\F_{q^2}^*=\F_{q^2}\setminus\{0\}$ is a cyclic group. Let $\theta$ be a primitive element of $\F_{q^2}$. Define the set
\begin{equation}
    \mB\left(\alpha\right)=\alpha+\F_{q}:=\left\{ \alpha+a:\: a\in\F_{q}\right\} \subseteq\F_{q^{2}}.
    \label{B_de_alfa}
\end{equation}
Note that $|\mB(\alpha)|=q$.

\begin{theorem}\label{B de alfa es conjunto B2 multiplicativo}
    $\mB\left(\alpha\right)$ is a Sidon set in the multiplicative group $\F_{q^{2}}^{*}$.
\end{theorem}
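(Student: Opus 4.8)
The plan is to verify the Sidon condition via property (S1): I will show that whenever $(\alpha+a)(\alpha+b)=(\alpha+c)(\alpha+d)$ with $a,b,c,d\in\F_q$, the unordered pairs coincide, i.e.\ $\{\alpha+a,\alpha+b\}=\{\alpha+c,\alpha+d\}$. First I would record the two structural facts I need about the extension $\F_{q^2}=\F_q(\alpha)$: since $\alpha$ is algebraic of degree $2$ over $\F_q$, its minimal polynomial yields a relation $\alpha^2=u\alpha+v$ with $u,v\in\F_q$, and $\{1,\alpha\}$ is an $\F_q$-basis of $\F_{q^2}$. As a by-product this shows $\alpha\notin\F_q$, hence $\alpha+a\neq 0$ for every $a\in\F_q$, so indeed $\mB(\alpha)\subseteq\F_{q^2}^*$, which is what makes the statement meaningful.

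The key step is to expand each product and reduce it to the basis $\{1,\alpha\}$. Writing $(\alpha+a)(\alpha+b)=\alpha^2+(a+b)\alpha+ab=(a+b+u)\,\alpha+(ab+v)$, and similarly for $c,d$, the equality of the two products becomes, by uniqueness of coordinates in the basis $\{1,\alpha\}$, the two scalar equations $a+b=c+d$ and $ab=cd$ in $\F_q$.

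Finally I would invoke Vieta's formulas: the conditions $a+b=c+d$ and $ab=cd$ say exactly that $\{a,b\}$ and $\{c,d\}$ are the root multisets of one and the same monic polynomial $X^2-(a+b)X+ab$ over the field $\F_q$, so $\{a,b\}=\{c,d\}$ and therefore $\{\alpha+a,\alpha+b\}=\{\alpha+c,\alpha+d\}$. (This also covers the degenerate case $a=b$, since a multiset of size two is still determined by its two symmetric functions.)

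I expect the only real content to be the linearization step, namely replacing $\alpha^2$ by $u\alpha+v$ so that a single multiplicative relation in $\F_{q^2}^*$ splits into a pair of additive relations in $\F_q$; once that is in place everything reduces to uniqueness of basis coordinates and to Vieta, both routine. The one bookkeeping point to keep straight is the exact shape of the minimal polynomial (whether one writes $\alpha^2=u\alpha+v$ or $\alpha^2+u\alpha+v=0$), but this merely relabels the constants and leaves the argument unchanged.
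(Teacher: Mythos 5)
Your proposal is correct and follows essentially the same route as the paper: both reduce the equality $(\alpha+a)(\alpha+b)=(\alpha+c)(\alpha+d)$ to the linear independence of $\{1,\alpha\}$ over $\F_q$ (the paper phrases this as ``a degree-one polynomial satisfied by $\alpha$ must be zero''), obtaining $a+b=c+d$ and $ab=cd$, and then conclude $\{a,b\}=\{c,d\}$ via the symmetric functions. Your explicit remarks on $\alpha\notin\F_q$ (so $\mB(\alpha)\subseteq\F_{q^2}^*$) and the degenerate case $a=b$ are minor additions the paper leaves implicit.
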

\begin{proof}
Suppose there exist $a,b,c,d\in\F_{q}$ such that
\begin{equation}
    (\alpha+a)(\alpha+b)=(\alpha+c)(\alpha+d).
    \label{First Lemma1}
\end{equation}
From \eqref{First Lemma1} we have that $\alpha$ satisfies the polynomial in $\F_{q}\left[x\right]$ of degree 1
\[
    P(x)=\left[a+b-\left(c+d\right)\right]x+ab-cd
\]
Because $\alpha$ is an algebraic element of degree 2 over the field $\F_{q}$, the minimal polynomial also have degree 2, so $P(x)$ must be the zero polynomial. Thus we have $a+b=c+d$ and $ab=cd$, that is possible only if $\left\{a,b\right\} =\left\{ c,d\right\}$, implying $\left\{\alpha+a,\alpha+b\right\} =\left\{ \alpha+c,\alpha+d\right\}$, that is, $\mB(\alpha)$ is a Sidon set in the multiplicative group $\F_{q^{2}}^{*}$.
\end{proof}
\vspace{0.2cm}

Now, using (S2) and Theorem \ref{B de alfa es conjunto B2 multiplicativo}, we can state Bose's construction.

\begin{theorem}\label{ConsBose}
    Let $\alpha$ be an algebraic element of degree 2 over $\F_{q}$ and $\theta$ be a primitive element of $\F_{q^{2}}$. The set
    \begin{equation}\label{log B de alfa}
    \mB(q,\theta,\alpha)=\log_{\theta}\left(\mB(\alpha)\right):=\{\log_{\theta}(\alpha+a):\, a\in\F_{q}\}
    \end{equation}
    is a Sidon set with $q$ elements in the abelian additive group $\Z_{q^{2}-1}$.
\end{theorem}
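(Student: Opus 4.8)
The plan is to exhibit the discrete logarithm $\log_{\theta}$ as a group isomorphism and then to invoke property (S2), so that the Sidon property established multiplicatively in Theorem~\ref{B de alfa es conjunto B2 multiplicativo} transfers verbatim to the additive group $\Z_{q^2-1}$. In other words, almost all of the work has already been done; the present statement is the ``translation'' of that result through the logarithm map.

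First I would check that $\log_{\theta}$ is actually defined on every element of $\mB(\alpha)$. Since $\alpha$ is algebraic of degree $2$ over $\F_{q}$ it does not lie in $\F_{q}$, and therefore $\alpha+a\neq 0$ for each $a\in\F_{q}$ (otherwise $\alpha=-a\in\F_{q}$, a contradiction). Hence $\mB(\alpha)\subseteq\F_{q^{2}}^{*}$ and each $\alpha+a$ has a well-defined discrete logarithm to the base $\theta$.

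Next I would record the structural fact driving the argument: because $\theta$ is primitive, $\F_{q^{2}}^{*}=\langle\theta\rangle$ is cyclic of order $q^{2}-1$, and the map
\[
\varphi=\log_{\theta}:\F_{q^{2}}^{*}\longrightarrow\Z_{q^{2}-1},\qquad \theta^{k}\mapsto k\!\!\pmod{q^{2}-1},
\]
is a bijection satisfying $\log_{\theta}(xy)=\log_{\theta}(x)+\log_{\theta}(y)\pmod{q^{2}-1}$. Thus $\varphi$ is an isomorphism from the multiplicative group $(\F_{q^{2}}^{*},\cdot)$ onto the additive group $(\Z_{q^{2}-1},+)$.

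Finally, Theorem~\ref{B de alfa es conjunto B2 multiplicativo} states that $\mB(\alpha)$ is a Sidon set in $\F_{q^{2}}^{*}$, so applying (S2) to the isomorphism $\varphi$ yields that $\varphi(\mB(\alpha))=\mB(q,\theta,\alpha)$ is a Sidon set in $\Z_{q^{2}-1}$; and since $\varphi$ is injective with $|\mB(\alpha)|=q$, the image again has exactly $q$ elements. I expect no genuine obstacle, as the substantive content lives entirely in the already-proved multiplicative statement. The only points deserving care are confirming that $0\notin\mB(\alpha)$ so that the logarithm is defined everywhere, and verifying that $\log_{\theta}$ is a true group isomorphism rather than merely a set bijection, so that the hypotheses of (S2) are genuinely met.
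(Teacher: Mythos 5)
Your proposal is correct and follows essentially the same route as the paper: identify $\log_{\theta}$ as an isomorphism from $\F_{q^{2}}^{*}$ onto $\Z_{q^{2}-1}$ and apply (S2) to the Sidon set $\mB(\alpha)$ of Theorem~\ref{B de alfa es conjunto B2 multiplicativo}. Your added checks that $0\notin\mB(\alpha)$ and that the cardinality is preserved are sensible details the paper leaves implicit.
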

\begin{proof}
Let $\theta$ be a primitive element of $\F_{q^{2}}$. Using the discrete logarithm to the base $\theta$, the multiplicative group $\F_{q^{2}}^{*}$ is isomorphic to the additive group $\Z_{q^{2}-1}$. Because $\mB\left(\alpha\right)$ defined in Theorem \ref{B de alfa es conjunto B2 multiplicativo} is a Sidon set in $\F_{q^{2}}^*$ then $\mB(q,\theta,\alpha)=\log_\theta\mB(\alpha)$ is a Sidon set in $\Z_{q^{2}-1}$.
\end{proof}

The following Proposition presents some properties in a Sidon set obtained from Bose's construction. As above, let $\theta$ be a primitive element of $\F_{q^2}$, and $\alpha\in \F_{q^2}$ an algebraic element of degree $2$ over $\F_q$. Also, we denote $\mB(q,\theta)=\mB(q,\theta,\alpha)$.

\begin{proposition}\label{proposition Bose}
     Let $\mB(q,\theta)$ be the Sidon set given in $\eqref{log B de alfa}$. Then
     \begin{enumerate}
        \item[(B1)] If $a\in \mB(q,\theta)$ then $a\not\equiv0\bmod(q+1)$.\smallskip
        \item[(B2)] If $a,b\in\mB(q,\theta)$ with $a\neq b$, then $a\not\equiv b\bmod(q+1)$.\smallskip
        \item[(B3)] $\mB(q,\theta)\bmod(q+1):=\{a\bmod(q+1):\, a\in \mB(q,\theta)\}=[1,q]$.\smallskip
            \end{enumerate}
\end{proposition}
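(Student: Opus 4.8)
The plan is to reduce all three statements to a single structural fact about how the subfield $\F_q$ sits inside $\F_{q^2}$ under the discrete logarithm. Concretely, I would first show that for $x\in\F_{q^2}^*$ one has $\log_\theta(x)\equiv 0\pmod{q+1}$ if and only if $x\in\F_q^*$. Since $\F_{q^2}^*$ is cyclic of order $q^2-1=(q-1)(q+1)$ and $\theta$ is primitive, the element $\theta^{q+1}$ has order $q-1$ and therefore generates the unique subgroup of $\F_{q^2}^*$ of order $q-1$, which is exactly $\F_q^*=\{x:\,x^{q-1}=1\}$. Hence $x\in\F_q^*$ precisely when $(q+1)\mid\log_\theta(x)$. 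This correspondence translates ``being a multiple of $q+1$'' into ``lying in $\F_q^*$'', and it is the engine for everything that follows.

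With this in place, (B1) and (B2) both come down to the single fact that $\alpha\notin\F_q$ (its degree over $\F_q$ is $2$). For (B1), an arbitrary element of $\mB(q,\theta)$ is $\log_\theta(\alpha+c)$ for some $c\in\F_q$; since $\alpha\notin\F_q$ we have $\alpha+c\notin\F_q$ (and in particular $\alpha+c\neq0$, so the logarithm is defined), whence $\log_\theta(\alpha+c)\not\equiv0\pmod{q+1}$. For (B2) I would argue by contradiction: suppose $\log_\theta(\alpha+c)\equiv\log_\theta(\alpha+d)\pmod{q+1}$ with $c\neq d$. Then $(\alpha+c)(\alpha+d)^{-1}\in\F_q^*$ by the correspondence above, say $\alpha+c=\lambda(\alpha+d)$ with $\lambda\in\F_q^*$. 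Rearranging gives $(1-\lambda)\alpha=\lambda d-c\in\F_q$; if $\lambda\neq1$ this exhibits $\alpha\in\F_q$, a contradiction, so $\lambda=1$, and then $c=d$, contradicting $c\neq d$.

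Finally, (B3) is a counting argument built on (B1) and (B2). The set $\mB(q,\theta)$ has $q$ elements; by (B2) their reductions modulo $q+1$ are pairwise distinct, and by (B1) none of these reductions is $0$. The nonzero residues modulo $q+1$ are exactly the $q$ values in $[1,q]$, so $q$ distinct nonzero reductions must fill $[1,q]$ completely, giving $\mB(q,\theta)\bmod(q+1)=[1,q]$.

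The only substantive step is the structural lemma of the first paragraph; once we know that $x\in\F_q^*$ is equivalent to $(q+1)\mid\log_\theta(x)$, the statements (B1), (B2) and (B3) are short. The point I would be most careful about is the direction of this equivalence---verifying that $\theta^{q+1}$ really generates $\F_q^*$ (and does not merely lie inside it), which uses that $\theta$ is primitive and that $\F_q^*$ is the unique subgroup of order $q-1$ in the cyclic group $\F_{q^2}^*$.
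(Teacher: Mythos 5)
Your proposal is correct and follows essentially the same route as the paper: the paper likewise rests on the identity $\F_q^*=\langle\theta^{q+1}\rangle$ to prove (B1), derives the same contradiction $(1-c)\alpha\in\F_q$ for (B2), and concludes (B3) by the same counting argument. Your version is slightly more careful in spelling out why $\theta^{q+1}$ generates all of $\F_q^*$ and in handling the case $\lambda=1$ explicitly, but the substance is identical.
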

\begin{proof}
    \begin{enumerate}
        \item[(B1)] If $a\in \mB(q,\theta)$, there exists $k\in\F_{q}$ such that $a=\log_{\theta}(\alpha+k)$, implying that $\theta^{a}=\alpha+k$. Because $\alpha\not\in\F_{q}^{*}=\left<\theta^{q+1}\right>$ we have that $\theta^{a}\not\in\F_{q}^{*}$. Therefore $q+1$ does not divide $a$ and so we have the desired result.\medskip
        \item[(B2)] We verify this property by contradiction. Suppose $a\equiv b\bmod(q+1)$, that is, there exists $t\in\Z$ such that $a-b=t(q+1)$. So $\theta^{a-b}=\theta^{t(q+1)}$ and we have $\theta^{a-b}\in\F_{q}^{*}$. On the other hand, since $a,b\in \mB(q,\theta)$, there exist $k_1,k_2\in\F_q$, $k_1\neq k_2$ such that $a=\log_{\theta}(\alpha+k_{1})$ and $b=\log_{\theta}(\alpha+k_{2})$, implying that $\theta^{a}=\alpha+k_{1}$ and $\theta^{b}=\alpha+k_{2}$. Thus $\theta^{a-b}=\frac{\alpha+k_{1}}{\alpha+k_{2}}$. Since $\theta^{a-b}\in\F_{q}^{*}$, there exists $c\in\F_{q}^{*}$ such that $\frac{\alpha+k_{1}}{\alpha+k_{2}}=c$. So, $(1-c)\alpha=ck_{2}-k_{1}$. Because $c\neq1$, $\alpha=(ck_{2}-k_{1})(1-c)^{-1}\in\F_{q}^{*}$ what is a contradiction.
            %Note that with this property we have $\left(\mB(q,\theta)\ominus \mB(q,\theta)\right)\cap M_{q+1}=\emptyset$.
            \medskip
        \item[(B3)] Since $|\mB(q,\theta)|=q$, using (B1) and (B2) we have $\mB(q,\theta)\bmod(q+1)=[1,q]$.\medskip
    \end{enumerate}
\end{proof}

%%%%%%%%%%%%%%%%%%%%%%%%%%%%%%%%%%%%%%%%%%%%%55
\subsection{Ruzsa's construction}
Let $p$ be a prime number and let $\F_p$ the finite field with $p$ elements.% In the following theorem we construct a two dimensional Sidon set in the abelian additive group $\Z_{p-1}\times\F_p$.

\begin{theorem}\label{TRuzsa1}
For any primitive element $\theta$ of $\F_p$, the set
\[
    \mathcal{R}:=\{(i,\theta^i):1\leq i\leq p-1\}
\]
is a Sidon set with $p-1$ elements in the abelian additive group $\Z_{p-1}\times\F_p$.
\end{theorem}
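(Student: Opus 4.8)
The plan is to prove the Sidon property directly through criterion (S1), after first recording that $|\mathcal{R}|=p-1$. The cardinality is immediate: since $\theta$ is primitive, its order in $\F_p^*$ is $p-1$, so the powers $\theta^1,\theta^2,\ldots,\theta^{p-1}$ are pairwise distinct; hence the $p-1$ pairs $(i,\theta^i)$ are distinct and $|\mathcal{R}|=p-1$. To verify (S1) I would start from an arbitrary relation
\[
    (i,\theta^i)+(j,\theta^j)=(k,\theta^k)+(l,\theta^l)
\]
in $\Z_{p-1}\times\F_p$, with $i,j,k,l\in[1,p-1]$, and unpack it into its two coordinate equations, remembering that the first coordinate is added modulo $p-1$ and the second in $\F_p$.

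This gives simultaneously $i+j\equiv k+l\pmod{p-1}$ in the first coordinate and $\theta^i+\theta^j=\theta^k+\theta^l$ in the second. The key bridging step, which I expect to be the main (if small) obstacle, is to translate the additive first-coordinate congruence into a multiplicative statement about the field elements themselves. Because $\theta$ has order exactly $p-1$, the congruence $i+j\equiv k+l\pmod{p-1}$ is equivalent to $\theta^{\,i+j}=\theta^{\,k+l}$, that is,
\[
    \theta^i\theta^j=\theta^k\theta^l .
\]
Thus the unordered pair $\{\theta^i,\theta^j\}$ and the unordered pair $\{\theta^k,\theta^l\}$ have the same sum and the same product in $\F_p$.

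Having both elementary symmetric functions agree, I would finish by a Vieta argument: the elements $\theta^i,\theta^j$ are precisely the roots in $\F_p$ of the monic quadratic $X^2-(\theta^i+\theta^j)X+\theta^i\theta^j$, and likewise for $\theta^k,\theta^l$. Since the coefficients of these two quadratics coincide, the quadratics are identical and therefore have the same root set, so $\{\theta^i,\theta^j\}=\{\theta^k,\theta^l\}$. Finally I would invoke primitivity once more: the map $m\mapsto\theta^m$ is a bijection from $[1,p-1]$ onto $\F_p^*$, so the equality of the power-sets forces $\{i,j\}=\{k,l\}$, and hence $\{(i,\theta^i),(j,\theta^j)\}=\{(k,\theta^k),(l,\theta^l)\}$. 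This establishes (S1) and shows that $\mathcal{R}$ is a Sidon set in $\Z_{p-1}\times\F_p$.
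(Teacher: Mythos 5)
Your proposal is correct and follows essentially the same route as the paper: both reduce the coordinatewise equation to equal sums and equal products of $\theta^i,\theta^j$ versus $\theta^k,\theta^\ell$ in $\F_p$ and conclude equality of the unordered pairs. You merely make explicit two steps the paper leaves implicit (the Vieta/quadratic argument and the bijectivity of $m\mapsto\theta^m$ on $[1,p-1]$), which is a welcome but not substantively different elaboration.
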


\begin{proof}
Let $(i,\theta^i),(j,\theta^j),(k,\theta^k),(\ell,\theta^\ell)$ be elements of $\mathcal{R}$ such that
\begin{equation}
    (i,\theta^i)+(j,\theta^j)=(k,\theta^k)+(\ell,\theta^\ell).
    \label{Ruzsa1}
\end{equation}
From \eqref{Ruzsa1} we have $i+j\equiv k+\ell\bmod (p-1)$ and $\theta ^i+\theta ^j=\theta ^k+\theta ^\ell$ in $\F_p$. Because $\theta$ is a primitive element of $\F_p$, it follows
\begin{align*}
    \theta ^i+\theta ^j & =\theta ^k+\theta ^\ell,\\
    \theta^{i}\theta^{j} & =\theta^{k}\theta^{\ell},
\end{align*}
what implies that $\{\theta^i,\theta^j\}=\{\theta^k,\theta^\ell\}$ and $\{i,j\}=\{k,l\}$. Therefore
\[
    \{(i,\theta^i),(j,\theta^j)\}=\{(k,\theta^k),(\ell,\theta^\ell)\},
\]
that is, $\mathcal{R}$ is a Sidon set in the abelian additive group $\Z_{p-1}\times\F_p$.
\end{proof}

Now, using (S2) and Theorem \ref{TRuzsa1}, we can state Ruzsa's construction.

\begin{theorem}\label{ConsRuzsa}
Let $\theta$ be a primitive element of $\F_{p}$. Then
\begin{equation}
    \mathcal{R}(p,\theta):=\{x\equiv ip-{\theta}^i(p-1) \bmod (p^2-p): 1\leq  i\leq p-1\}
    \label{conjunto de Ruzsa}
\end{equation}
is a Sidon set with $p-1$ elements in the additive group $\Z_{p^2-p}$.
\end{theorem}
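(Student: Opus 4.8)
The plan is to realize $\mathcal{R}(p,\theta)$ as the image of the Sidon set $\mathcal{R}$ from Theorem~\ref{TRuzsa1} under an explicit group isomorphism, and then invoke property (S2). Since the additive group of $\F_p$ is just $\Z_p$, the ambient group in Theorem~\ref{TRuzsa1} is $\Z_{p-1}\times\Z_p$. The key observation is that $\gcd(p-1,p)=1$, so the Chinese Remainder Theorem furnishes an isomorphism $\Z_{p-1}\times\Z_p\cong\Z_{p(p-1)}=\Z_{p^2-p}$.

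First I would write down this isomorphism explicitly, as the inverse of the natural reduction map $x\mapsto(x\bmod(p-1),\,x\bmod p)$. Computing the CRT idempotents is the decisive step: since $p\equiv 1\pmod{p-1}$ and $p-1\equiv -1\pmod p$, one checks that the element $p$ reduces to $(1,0)$ and the element $-(p-1)=1-p$ reduces to $(0,1)$. Hence the isomorphism $\varphi:\Z_{p-1}\times\Z_p\to\Z_{p^2-p}$ is given by
\[
    \varphi(i,r)=ip-r(p-1)\pmod{p^2-p}.
\]
One verifies this is well defined (a shift of $i$ by $p-1$ or of $r$ by $p$ changes the value by a multiple of $p^2-p$) and bijective, which is precisely the content of CRT in this instance.

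Next I would apply $\varphi$ to each generator $(i,\theta^i)$ of $\mathcal{R}$, reading $\theta^i$ as an element of $\Z_p$. This yields $\varphi(i,\theta^i)\equiv ip-\theta^i(p-1)\pmod{p^2-p}$, which is exactly the defining expression for the elements of $\mathcal{R}(p,\theta)$ in \eqref{conjunto de Ruzsa}. Thus $\varphi(\mathcal{R})=\mathcal{R}(p,\theta)$. Because $\mathcal{R}$ is a Sidon set in $\Z_{p-1}\times\F_p$ by Theorem~\ref{TRuzsa1} and $\varphi$ is an isomorphism, property (S2) immediately gives that $\mathcal{R}(p,\theta)$ is a Sidon set in $\Z_{p^2-p}$; since $\varphi$ is injective and $|\mathcal{R}|=p-1$, the image also has $p-1$ elements.

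There is no serious obstacle here, as the argument is a direct transport of structure. The one place demanding care is the bookkeeping of the CRT isomorphism — confirming that the multipliers $p$ and $-(p-1)$ are the correct idempotent coefficients so that the closed form $ip-\theta^i(p-1)$ matches \eqref{conjunto de Ruzsa} verbatim. Once the explicit $\varphi$ is pinned down, everything else follows mechanically.
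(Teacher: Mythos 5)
Your proposal is correct and follows essentially the same route as the paper: both transport the Sidon set $\mathcal{R}$ of Theorem~\ref{TRuzsa1} to $\Z_{p^2-p}$ via the Chinese Remainder Theorem isomorphism and conclude by (S2). The only cosmetic difference is that you derive the explicit formula $\varphi(i,r)=ip-r(p-1)$ by computing the CRT idempotents, whereas the paper solves the pair of congruences directly; both yield the same map.
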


\begin{proof}
By Theorem \ref{TRuzsa1} we know that
\[
    \mathcal{R}=\{(i,\theta^i):1\leq i\leq p-1\},
\]
is a Sidon set with $p-1$ elements in the additive group $\Z_{p-1}\times\F_p$ .

Let $\varphi:\Z_{p-1}\times\Z_p \rightarrow\Z_{p(p-1)}$ be the isomorphism defined by the Chinese remainder theorem, we have the following system of congruences
\begin{align*}
    \varphi(i,{\theta}^i)&\equiv i \bmod (p-1),\\
    \varphi(i,{\theta}^i)&\equiv {\theta}^i \bmod p,
\end{align*}
for all $(i,\theta^i)\in \mathcal{R}$.

To solve the above system, let $x:=\varphi(i,{\theta}^i)$. So
\begin{align}
    x&\equiv i\bmod (p-1) \label{equ 1},\\
    x&\equiv {\theta}^i \bmod p \label{equ 2},
\end{align}

From \eqref{equ 1}, there exists $t\in\Z$ such that $x=i+t(p-1)$. Replacing $x$ in \eqref{equ 2} we get
\begin{align*}
    i+t(p-1) & \equiv {\theta}^i \bmod p\\
    t & \equiv   i-{\theta}^i \bmod p
\end{align*}
what implies that there exists $s\in\Z$ such that $t = i-{\theta}^i+sp$. So, $x=ip-{\theta}^i(p-1)+sp(p-1)$ and since $x\in\Z_{p^2-p}$ we have $x=ip-{\theta}^i(p-1)$.

Therefore, the image of $ \mathcal{R}$ through $\varphi$ is $\mathcal{R}(p,\theta)$, and so by (S2) we have the desired result.
\end{proof}

Using the Sidon set given in Theorem \ref{ConsRuzsa}, in the following proposition we present some properties that it satisfies.
\begin{proposition}\label{proposition Ruzsa}
Let $\mathcal{R}(p,\theta)$ be the set given in \eqref{conjunto de Ruzsa}.
\begin{enumerate}
        \item[(R1)]$\mathcal{R}(p,\theta)\bmod{p}=[1,p-1]$.\smallskip
        \item[(R2)]$\mathcal{R}(p,\theta)\bmod(p-1)=[1,p-1]$.\smallskip
\end{enumerate}
\end{proposition}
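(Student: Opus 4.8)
The plan is to establish both congruence statements directly from the explicit formula $x \equiv ip - \theta^i(p-1) \bmod (p^2-p)$ that defines $\mathcal{R}(p,\theta)$, reducing each representative modulo $p$ and modulo $p-1$ respectively and then invoking the bijective nature of the map $i \mapsto \theta^i$.

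For (R1), I would reduce the defining expression modulo $p$. Since $ip \equiv 0 \bmod p$ and $-(p-1) \equiv 1 \bmod p$, the representative $x$ satisfies $x \equiv \theta^i \bmod p$ for each $i$ with $1 \leq i \leq p-1$. As $\theta$ is a primitive element of $\F_p$, the powers $\{\theta^i : 1 \leq i \leq p-1\}$ run exactly once through all nonzero residues of $\F_p$, namely $[1,p-1]$. Hence $\mathcal{R}(p,\theta) \bmod p = [1,p-1]$.

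For (R2), I would reduce modulo $p-1$ instead. Here $ip \equiv i \bmod (p-1)$ because $p \equiv 1 \bmod (p-1)$, while the term $-\theta^i(p-1)$ vanishes modulo $p-1$. Thus $x \equiv i \bmod (p-1)$, and as $i$ ranges over $[1,p-1]$ these residues are all distinct and cover $[1,p-1]$ exactly once, giving $\mathcal{R}(p,\theta) \bmod (p-1) = [1,p-1]$.

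I expect no serious obstacle: both parts are immediate consequences of the two elementary congruences $p \equiv 1 \bmod (p-1)$ and $-(p-1) \equiv 1 \bmod p$, combined with the fact that a primitive root generates all nonzero residues. The only point requiring mild care is confirming that the reductions are well-defined on the residue class of $x$ modulo $p^2-p$, i.e.\ that adding a multiple of $p^2-p = p(p-1)$ does not disturb either reduction; this holds because $p(p-1)$ is divisible by both $p$ and $p-1$, so the passage from $x \in \Z_{p^2-p}$ to its reductions modulo $p$ and modulo $p-1$ is consistent.
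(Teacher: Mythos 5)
Your proof is correct and follows essentially the same route as the paper: reduce the defining expression $ip-\theta^i(p-1)$ modulo $p$ to get $\theta^i$ (covering $[1,p-1]$ since $\theta$ is primitive) and modulo $p-1$ to get $i$, with the well-definedness remark being a harmless extra. No discrepancies.
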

\begin{proof}
\begin{enumerate}
         \item[(R1)] Note that
         \begin{align*}
         \mathcal{R}(p,\theta)\bmod(p)&=\{-\theta^{i}(p-1)\bmod p: 1\leq  i\leq p-1\}\\
                                      &=\{\theta^{i}\bmod p: 1\leq  i\leq p-1\}\\
                                      &=[1,p-1].
         \end{align*}

         \item[(R2)] As in the previous property, note that
         \begin{align*}
         \mathcal{R}(p,\theta)\bmod(p-1)&=\{ip\bmod(p-1): 1\leq  i\leq p-1\}\\
                                      &=\{i: 1\leq  i\leq p-1\}\\
                                      &=[1,p-1].
         \end{align*}
    \end{enumerate}
\end{proof}

\section{New Constructions of Sonar Sequences}
To introduce the notion of sonar sequences as a function consider the following definition.

%Sonar sequences were initially introduced in [ref Golomb and Taylor] as examples of two-dimensional synchronization patterns with minimum ambiguity \cite{Sonar}. Some known constructions of sonar sequences are reminiscent of constructions of Costas arrays \cite{Sonar} Una secuencia sonar $m\times n$ es un arreglo de puntos y espacios que tienen $m$ filas y exactamente un punto en cada una de sus $n$ columnas, sujeto a la restricción que cada par de puntos determinan vectores distintos.
\begin{Definition}
A function $f:[1,n]\rightarrow [1,m]$ has the distinct differences property if for all $i,j,h\in\mathbb{Z}$ such that $1\leq h\leq n-1$ and $1\leq i,j\leq n-h$,
\begin{equation}
    f(i+h)-f(i)=f(j+h)-f(j)\Rightarrow i=j,
    \label{DDDef}
\end{equation}
%Furthermore, if $[1,m]$ is a set of representatives of the integers modulo $m$ and \eqref{DDDef} is replaced by
Furthermore, if $[1,m]$ is replaced by $\Z_m$ and \eqref{DDDef} by
\begin{equation}
    f(i+h)-f(i)\equiv f(j+h)-f(j)\pmod m\Rightarrow i=j,
    \label{DDModularDef}
\end{equation}
$f$ has the distinct differences property module $m$.
\end{Definition}

%Note that if $G_f=\{(x,f(x)):x\in [1,n]\}$ denotes the graph of $f$, $f$ has the distinct differences property if $G_f$ is a Sidon set in the additive group $\Z\times\Z$.

\medskip
Using the previous definition we can state the concept of sonar sequence.
\begin{Definition}
An $m\times n$ sonar sequence is a function $f:[1,n]\rightarrow [1,m]$ with the distinct differences property. Furthermore, an $m\times n$ modular sonar sequence is a function $f:[1,n]\rightarrow \Z_m$ satisfying the distinct differences property module $m$.
\end{Definition}

\medskip
In the following theorem we present the main result of this work that shows a new construction of modular sonar sequences using Sidon sets.

\begin{theorem}\label{new sonar general}
Let $m,b\in\mathbb{N}$ and $\mA=\{a_1,\ldots,a_n\}$ be a Sidon set in the additive group $\mathbb{Z}_{mb}$. If $\mA \bmod b:=\{a\bmod b:a\in \mA\}=[1,n]$, then the function $f:[1,n]\rightarrow\Z_m$ defined by $f(i)=\left\lfloor\frac{a_i}{b}\right\rfloor$, is an $m\times n$ modular sonar sequence, where $a_i$ is the unique element such that $a_i\equiv i\bmod b$ and $\left\lfloor y\right\rfloor$ denotes the smallest integer less than or equal to $y$.
\end{theorem}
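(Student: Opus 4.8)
The plan is to convert the distinct-differences condition on $f$ into the Sidon condition on $\mA$, by building from $\mA$ two sums of pairs whose equality in $\Z_{mb}$ exactly encodes the hypothesis $f(i+h)-f(i)\equiv f(j+h)-f(j)\pmod m$.

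First I would pin down the exact shape of the elements $a_k$. Since $\mA\bmod b=[1,n]$, the residues $1,\dots,n$ are distinct residues modulo $b$, none equal to $0$, which forces $n\le b-1$. Hence for every index $k\in[1,n]$ we have $k\bmod b=k$ (no wrap-around), and the division algorithm yields the clean decomposition
\[
a_k=f(k)\,b+k,\qquad f(k)=\left\lfloor\frac{a_k}{b}\right\rfloor .
\]
This single fact — that the ``low part'' of $a_k$ is precisely its index $k$ — is what makes the argument align.

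Next, assuming $f(i+h)-f(i)\equiv f(j+h)-f(j)\pmod m$ with $1\le h\le n-1$ and $1\le i,j\le n-h$, I would form the two cross sums $a_{i+h}+a_j$ and $a_{j+h}+a_i$ and subtract. Using the decomposition above,
\[
a_{i+h}+a_j-\bigl(a_{j+h}+a_i\bigr)=\bigl[f(i+h)+f(j)-f(j+h)-f(i)\bigr]b,
\]
because the index parts $(i+h)+j$ and $(j+h)+i$ coincide and cancel. The hypothesis says the bracket equals $[f(i+h)-f(i)]-[f(j+h)-f(j)]$, a multiple of $m$, so the whole right-hand side is a multiple of $mb$; that is, $a_{i+h}+a_j=a_{j+h}+a_i$ in $\Z_{mb}$.

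Finally I would invoke the Sidon property (S1), which forces the multiset equality $\{a_{i+h},a_j\}=\{a_{j+h},a_i\}$, and split into the two possible matchings. The matching $a_{i+h}=a_{j+h}$, $a_i=a_j$ gives $i=j$ at once by comparing residues modulo $b$. The point that needs care is the other matching $a_{i+h}=a_i$, $a_j=a_{j+h}$: comparing residues yields $i+h\equiv i\pmod b$, i.e. $h\equiv0\pmod b$, which is impossible since $1\le h\le n-1<b$. Thus only the first matching survives and $i=j$, so $f$ has the distinct differences property modulo $m$. The step I expect to demand the most attention is the bookkeeping that the cross-sum difference is divisible by $mb$ and not merely by $b$ — this is exactly where the cancellation of the index parts, and hence the inequality $n<b$ coming from the hypothesis, is essential — together with ruling out the degenerate pairing $a_{i+h}=a_i$.
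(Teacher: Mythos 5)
Your proposal is correct and follows essentially the same route as the paper's own proof: both rest on the decomposition $a_k=\left\lfloor\frac{a_k}{b}\right\rfloor b+k$ (valid because $\mA\bmod b=[1,n]$ forces $n\le b-1$), translate the congruence on $f$ into $a_{i+h}+a_j\equiv a_{j+h}+a_i\pmod{mb}$, and conclude via the Sidon property. Your explicit ruling out of the degenerate pairing $a_{i+h}=a_i$ using $1\le h<b$ is a point the paper passes over silently, but it is the same argument.
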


\begin{proof}
Let $i,j,h\in[1,n]$ such that $1\leq h\leq n-1$ and $1\leq i,j\leq n-h$. Suppose that
\begin{equation*}
    f(i+h)-f(i)\equiv f(j+h)-f(j)\pmod m,
    \label{Cons1}
\end{equation*}
that is
\begin{equation*}
    \left\lfloor\frac{a_{i+h}}{b}\right\rfloor-\left\lfloor\frac{a_i}{b}\right\rfloor \equiv
    \left\lfloor\frac{a_{j+h}}{b}\right\rfloor-\left\lfloor\frac{a_j}{b}\right\rfloor\pmod m.
\end{equation*}
%where $a_k$ is the unique element of $\mA$ such that $a_k\equiv k\bmod(b)$, for $k=i,j,i+h,j+h$.
Thus, there exists $t\in\mathbb{Z}$ such that
\begin{equation}
    \left\lfloor\frac{a_{i+h}}{b}\right\rfloor-\left\lfloor\frac{a_i}{b}\right\rfloor=
    \left\lfloor\frac{a_{j+h}}{b}\right\rfloor-\left\lfloor\frac{a_j}{b}\right\rfloor +tm.
    \label{Cons1}
\end{equation}
By multiplying \eqref{Cons1} by $b$ we have
\begin{equation}
    b\left\lfloor\frac{a_{i+h}}{b}\right\rfloor-b\left\lfloor\frac{a_i}{b}\right\rfloor=
    b\left\lfloor\frac{a_{j+h}}{b}\right\rfloor-b\left\lfloor\frac{a_j}{b}\right\rfloor + tmb.
    \label{Cons2}
\end{equation}
Because for each $a_k\in\mA$ is satisfied that $a_k=\left\lfloor\frac{a_k}{b}\right\rfloor b+k$, from \eqref{Cons2} we have
\begin{align*}
    [a_{i+h}-(i+h)]-[a_i-i]&=[a_{j+h}-(j+h)]-[a_j-j]+tmb,\\
    a_{i+h}-a_{i}-a_{j+h}+a_{j}&=tmb,
\end{align*}
implying that
\[
    a_{i+h}+a_{j}\equiv a_{j+h}+a_{i}\pmod {mb}.
\]
Because $\mA$ is a Sidon set in the additive group $\mathbb{Z}_{mb}$ we have $\{i+h,j\}=\{j+h,i\}$, that is $i=j$. Thus, $f$ is an $m\times n$ modular sonar sequence.
\end{proof}

\medskip
As a consequence of Theorem \ref{new sonar general}, and applying Theorems \ref{ConsBose} and \ref{ConsRuzsa}, and Propositions \ref{proposition Bose} and \ref{proposition Ruzsa}, we present the following new constructions of sonar sequences. Proofs follows immediately from Theorem \ref{new sonar general}.
\medskip

\begin{corollary}\label{Bose1}
Let $\mB(q,\theta,\alpha)$ be the set given in $\eqref{log B de alfa}$. %and let $q$ be a prime power, $b=q+1$, and $m=q-1$.
The function $f:[1,q]\rightarrow\Z_{q-1}$ defined by $f(i):=\left\lfloor\frac{b_i}{q+1}\right\rfloor$, where $b_i\in \mB(q,\theta,\alpha)$ is the unique element such that $b_i\equiv i\bmod (q+1)$, is a $(q-1)\times q$ modular sonar sequence.
\end{corollary}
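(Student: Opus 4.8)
The plan is to derive Corollary~\ref{Bose1} as a direct application of Theorem~\ref{new sonar general}, using Bose's construction (Theorem~\ref{ConsBose}) together with the congruence properties recorded in Proposition~\ref{proposition Bose}. The key observation is that the corollary's parameters are chosen precisely so that the hypotheses of Theorem~\ref{new sonar general} are met with the substitutions $m = q-1$, $b = q+1$, and $n = q$, so that $mb = (q-1)(q+1) = q^2 - 1$ and the index range $[1,n] = [1,q]$ matches the cardinality of Bose's set.

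Let me verify what I need to check against the theorem. The theorem requires:
- A Sidon set in $\mathbb{Z}_{mb}$ of size $n$
- The reduction mod $b$ equals $[1,n]$
- Each residue class is hit exactly once (so $a_i$ is well-defined)

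So first I would set $m = q-1$ and $b = q+1$, giving $mb = q^2-1$, and take $\mathcal{A} = \mathcal{B}(q,\theta,\alpha)$. By Theorem~\ref{ConsBose}, this is a Sidon set with $q$ elements in $\mathbb{Z}_{q^2-1}$, so the ambient-group and cardinality hypotheses of Theorem~\ref{new sonar general} hold with $n = q$. Next I would invoke property (B3) of Proposition~\ref{proposition Bose}, which states exactly that $\mathcal{B}(q,\theta)\bmod(q+1) = [1,q]$; this is the hypothesis $\mathcal{A}\bmod b = [1,n]$ required by the theorem. Because $|\mathcal{B}(q,\theta,\alpha)| = q = |[1,q]|$, the reduction map onto the $q$ residue classes is a bijection, so for each $i\in[1,q]$ there is a unique $b_i\in\mathcal{B}(q,\theta,\alpha)$ with $b_i\equiv i\bmod(q+1)$; this guarantees that the element $b_i$ appearing in the definition of $f$ is well defined.

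With all hypotheses verified, Theorem~\ref{new sonar general} applies verbatim and yields that $f(i) = \lfloor b_i/(q+1)\rfloor$ is an $m\times n = (q-1)\times q$ modular sonar sequence, which is the claim. I would note in passing that the codomain is indeed $\mathbb{Z}_{q-1}$: since $0 \le b_i \le q^2-2$ and $b_i \not\equiv 0 \bmod(q+1)$ by (B1), the floor $\lfloor b_i/(q+1)\rfloor$ ranges over $\{0,1,\ldots,q-2\}$, consistent with $m = q-1$.

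The main obstacle is not really an obstacle: since the corollary is stated as an immediate consequence, the only substantive point is the bookkeeping of identifying the correct values of $m$, $b$, and $n$ and confirming that properties (B1)--(B3) supply precisely the unstated hypotheses (well-definedness of $b_i$ and the equality $\mathcal{A}\bmod b = [1,n]$) that Theorem~\ref{new sonar general} leaves to the caller. The only place where genuine care is warranted is ensuring that the Sidon property in Theorem~\ref{ConsBose} is stated for the same modulus $q^2-1 = mb$ that Theorem~\ref{new sonar general} expects; this matches exactly, so the argument goes through with no residual gap.
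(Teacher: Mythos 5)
Your proposal is correct and follows exactly the route the paper intends: the paper simply states that the corollary ``follows immediately from Theorem~\ref{new sonar general}'' via Theorem~\ref{ConsBose} and Proposition~\ref{proposition Bose}, which is precisely the instantiation $m=q-1$, $b=q+1$, $n=q$ that you carry out. Your additional checks (well-definedness of $b_i$ via the bijection from (B3), and the range of the floor values) are correct and only make explicit what the paper leaves implicit.
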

\medskip

\begin{corollary}\label{CRuzsa1}
Let $\mathcal{R}(p,\theta)$ be the set given in \eqref{conjunto de Ruzsa}. Then the function
\begin{enumerate}
    \item $f:[1,p-1]\rightarrow\Z_{p-1}$ defined by $f(i):=\left\lfloor\frac{r_i}{p}\right\rfloor$, where $r_i\in\mathcal{R}(p,\theta)$ is the unique element such that $r_i\equiv i\bmod p$, is a $(p-1)\times(p-1)$ modular sonar sequence.\medskip
    \item $f:[1,p-1]\rightarrow\Z_{p}$ defined by $f(i):=\left\lfloor\frac{r_i}{(p-1)}\right\rfloor$, where $r_i\in\mathcal{R}(p,\theta)$ is the unique element such that $r_i\equiv i\bmod(p-1)$, is a $p\times(p-1)$ modular sonar sequence.
\end{enumerate}
%For any prime $p$ there exists a $p-1\times p-1$ modular sonar sequence.
\end{corollary}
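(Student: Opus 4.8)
The plan is to obtain both parts as direct instances of Theorem~\ref{new sonar general}, applied to the single Sidon set $\mathcal{A}=\mathcal{R}(p,\theta)$ in $\Z_{p^2-p}$. The key observation is that the modulus factors as $p^2-p=p(p-1)$, and the two parts correspond to the two ways of writing this product as $mb$: taking $(m,b)=(p-1,p)$ for the first and $(m,b)=(p,p-1)$ for the second. In both cases the underlying Sidon set is the same; only the factorization of the modulus, and hence the roles of $m$ and $b$, are swapped.

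For part (1), I would set $n=p-1$, $b=p$, and $m=p-1$, so that $mb=p(p-1)=p^2-p$. Theorem~\ref{ConsRuzsa} guarantees that $\mathcal{R}(p,\theta)$ is a Sidon set in $\Z_{mb}=\Z_{p^2-p}$ with exactly $p-1=n$ elements, which verifies the Sidon hypothesis of Theorem~\ref{new sonar general}. Property (R1) of Proposition~\ref{proposition Ruzsa} gives $\mathcal{R}(p,\theta)\bmod b=\mathcal{R}(p,\theta)\bmod p=[1,p-1]=[1,n]$, which is precisely the residue-covering hypothesis. Hence, for each $i\in[1,n]$ there is a unique $r_i\in\mathcal{R}(p,\theta)$ with $r_i\equiv i\bmod p$, and Theorem~\ref{new sonar general} asserts that $f(i)=\lfloor r_i/p\rfloor$ is an $m\times n=(p-1)\times(p-1)$ modular sonar sequence, as claimed.

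For part (2), I would keep the same Sidon set but now set $b=p-1$ and $m=p$, again with $mb=p^2-p$ and $n=p-1$. The Sidon hypothesis is unchanged, being supplied once more by Theorem~\ref{ConsRuzsa}. This time property (R2) of Proposition~\ref{proposition Ruzsa} supplies $\mathcal{R}(p,\theta)\bmod(p-1)=[1,p-1]=[1,n]$, so the residue-covering hypothesis holds with $b=p-1$. Theorem~\ref{new sonar general} then yields that $f(i)=\lfloor r_i/(p-1)\rfloor$, with $r_i\equiv i\bmod(p-1)$, is a $p\times(p-1)$ modular sonar sequence.

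Since the real content, namely the distinct-differences verification, is entirely carried by Theorem~\ref{new sonar general}, no genuine obstacle remains; the only care needed is the bookkeeping of matching parameters. Concretely, one must check that the cardinality $|\mathcal{R}(p,\theta)|=p-1$ equals $n$ and that each residue set is exactly $[1,n]$ (not merely contained in it), so that the assignment $i\mapsto r_i$ is a well-defined bijection from $[1,n]$ onto the Sidon set and $f$ is well defined. Proposition~\ref{proposition Ruzsa} delivers both facts, one version for each choice of $b$, which is why the corollary follows immediately.
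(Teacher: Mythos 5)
Your proposal is correct and matches the paper's intent exactly: the paper derives this corollary immediately from Theorem~\ref{new sonar general} together with Theorem~\ref{ConsRuzsa} and Proposition~\ref{proposition Ruzsa}, using the two factorizations $p^2-p=(p-1)\cdot p$ and $p^2-p=p\cdot(p-1)$ just as you do. Your explicit bookkeeping of the parameters $m$, $b$, $n$ and of the residue-covering hypothesis is precisely the verification the paper leaves implicit.
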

%
%\begin{proof}
%In Theorem \ref{new sonar general}, choose $b=p$ and $m=p-1$. By Property (R1) in Proposition \ref{proposition Ruzsa} we know that $\mathcal{R}(p,\theta)\bmod p=[1,p-1]$. So, for each $i\in[1,p-1]$ there exists a unique $r_i\in\mathcal{R}(p,\theta)$ such that $r_i\equiv i\bmod p$, that is, $r_i$ can be written as follows
%\[
%    r_i=\left\lfloor\frac{r_i}{p}\right\rfloor p+i
%\]
%Therefore, the function $f:[1,p-1]\rightarrow\Z_{p-1}$ as in Theorem \ref{new sonar general} is a $p-1\times p-1$ modular sonar sequence.
%\end{proof}
%\medskip
%
%\begin{corollary}\label{CRuzsa2}
%For any prime $p$ there exists a $p\times p-1$ modular sonar sequence.
%\end{corollary}
%
%\begin{proof}
%In Theorem \ref{new sonar general}, choose $m=p$ and $b=p-1$. By Property (R2) in Proposition \ref{proposition Ruzsa} we know that $\mathcal{R}(p,\theta)\bmod(p-1)=[1,p-1]$. So, for each $i\in[1,p-1]$ there exists a unique $r_i\in\mathcal{R}(p,\theta)$ such that $r_i\equiv i\bmod(p-1)$, that is, $r_i$ can be written as
%\[
%    r_i=\left\lfloor\frac{r_i}{(p-1)}\right\rfloor (p-1)+i
%\]
%Therefore, the function $f:[1,p-1]\rightarrow\Z_{p}$ as in Theorem \ref{new sonar general} is a $p\times p-1$ modular sonar sequence.
%\end{proof}
The following examples illustrates the Theorem \ref{new sonar general} and constructions given above.

\medskip

\begin{example}
Let $q=9$. From Bose's construction, $$\mathcal{B}=\{1,4,37,38,49,53,55,62,76\}$$ is a Sidon set with 9 elements in the additive group $\Z_{(10)(8)}$. From Corollary \ref{Bose1}, the function $f:[1,9]\rightarrow\Z_{8}$, defined by $f(i)=\lfloor b_i/8 \rfloor$, where $b_i\in\mathcal{B}$, defines a $8\times 9$ modular sonar sequence with 9 elements given by
\[
    [0,6,5,0,5,7,3,3,4].
    %\{(1,0),(2,6),(3,5),(4,0),(5,5),(6,7),(7,3),(8,3),(9,4)\}
\]
\end{example}
\begin{example}\

\begin{enumerate}
    \item Given $p=7$, from Ruzsa's construction we have that $$\mathcal{R}_1=\{2, 4, 5, 27, 31, 36\}$$ is a Sidon set with 6 elements in the additive group $\Z_{(7)(6)}$. From Corollary \ref{CRuzsa1} (item 1)), the function $f:[1,6]\rightarrow\Z_{6}$, defined by $f(i)=\lfloor r_i/7 \rfloor$, where $r_i\in\mathcal{R}_1$, defines a $6\times 6$ modular sonar sequence with 6 elements given by
        \[
            [5,0,4,0,0,3].
            %\{(1,5),(2,0),(3,4),(4,0),(5,0),(6,3)\}
        \]
    \item Given $p=13$, from Ruzsa's construction we have that $$\mathcal{R}_2=\{10, 16, 57, 59, 90, 99, 115, 134, 144, 145, 149, 152\}$$ is a Sidon set with 12 elements in the additive group $\Z_{(13)(12)}$. Now, from Corollary \ref{CRuzsa1} (item 2)), the function $f:[1,12]\rightarrow\Z_{13}$, defined by $f(i)=\lfloor r_i/12 \rfloor$, where $r_i\in\mathcal{R}_2$, defines a $13\times 12$ modular sonar sequence with 12 elements given by
        \[
            [12,12,11,8,1,12,7,9,12,4,0,4]
            %\{(1,12), (2,12), (3,11), (4,8), (5,1), (6,12), (7,7), (8,9), (9,12), (10,4), (11,0), (12,4)\}
        \]
\end{enumerate}
\end{example}

\section{Associated Problems}
The main problem in sonar sequences consists in to analyze the asymptotic behavior of the maximal functions
\begin{align*}
    G(m):=&\max \{n:\text{there exist a sonar sequence } m\times n\},\\
    G(\bmod{m}):=&\max \{n:\text{there existe a modular sonar sequence } m\times n\}.
\end{align*}
It is easy to prove that $G(m)\geq G\pmod{m}$. From the sonar sequences-constructions given in \cite{Sonar} and those presented in this paper, for any prime $p$ and any prime power $q=p^r$, we have the following relations
\begin{align*}
    G(\bmod{p})=p+1&\Rightarrow G(p)\geq p+1,\\
    G(\bmod(q-1))=q&\Rightarrow G(q-1)\geq q.
\end{align*}
So, is natural to state the following problem.\medskip\newline
\textbf{Problem 1.} Let $m$ be a positive integer. It is possible to prove or refute that $G(m)\geq m$. We conjecture that this is true.\medskip\newline
Now, we know that there exist modular sonar sequences for the following modulus: $p$ (Quadratic and Welch), $p-1$ (Welch and our Theorem \ref{new sonar general}), $q-1$ (Shift, Golomb, and our Theorem \ref{new sonar general}). Thus, we have the second problem.\medskip\newline
\textbf{Problem 2.} What can we say about sonar sequences--constructions for other modulus? Furthermore, is it possible to establish the exact value for $G(\bmod 35)$?\medskip\newline
We know several modular sonar sequences--constructions, but we don't know constructions for non--modular sonar sequences. So, we state the third problem.\medskip\newline
\textbf{Problem 3.} To establish constructions for $m\times n$ sonar sequences. It is possible to use Sidon sets, as we used in this paper, to find one of such constructions?\medskip\newline
Finnally, we are working in the following problem, with we hope to have new results.\medskip\newline
\textbf{Problem 4.} Make comparisons, using computer, among the known and new constructions of sonar sequences, to determine which of them produces better sequences.

\bigskip%\newline
\textbf{Acknowledgments.} Authors thank  to ``Patrimonio autónomo Fondo Nacional de Financiamiento para la Ciencia, la Tecnología y la Innovación--Francisco José de Caldas'' and COLCIENCIAS for financial support under research project 110356935047.%Furthermore, Diego Ruiz and Carlos Trujillo thank to University of Cauca.
\bibliographystyle{ieeetr}
%\bibliography{BibInTex}

\end{document}